\author{Theodore Voronov}
\address{School of Mathematics, University of Manchester,
Oxford Road, Manchester, M13 9PL, United Kingdom}
\email{theodore.voronov@manchester.ac.uk}
\title[On a non-Abelian Poincar\'{e} lemma]{On a non-Abelian Poincar\'{e} lemma}
\date{20 December 2010 (2 January 2011)}
\keywords{Maurer--Cartan equation, Lie superalgebras, differential
forms, supermanifolds, Lie algebroids, homological vector fields,
multiplicative integral, $Q$-manifolds, Quillen's superconnection}
\numberwithin{equation}{section}
\newtheorem{thm}{Theorem}[section]
\newtheorem{lm}[thm]{Lemma}
\newtheorem{prop}{Proposition}[section]
\newtheorem{cor}{Corollary}[section]
\theoremstyle{definition}
\newtheorem{ex}{Example}[section]
\newtheorem{rem}{Remark}[section]
\def\co{\colon\thinspace}
\DeclareMathOperator{\Vect}{\mathfrak{X}}
\DeclareMathOperator{\Ad}{Ad}
\newcommand{\oder}[2]{{\frac{d {#1}}{d {#2}}}}
\newcommand{\der}[2]{{\frac{\partial {#1}}{\partial {#2}}}}
\newcommand{\R}[1]{{\mathbb R}^{#1}}
\newcommand{\Z}{{\mathbb Z_{2}}}
\newcommand{\ZZ}{{\mathbb Z}}
\newcommand{\w}{{\boldsymbol{w}}}
\renewcommand{\a}{\alpha}
\renewcommand{\b}{\beta}
\def\s{\sigma}
\def\O{\Omega}
\def\D{\Delta}
\def\o{\omega}
\newcommand{\h}{\eta}
\newcommand{\x}{{\xi}}
\DeclareMathOperator{\texp}{{T}\,{exp}}
\newcommand{\texpp}{\texp\, {p_*}}
\DeclareMathOperator{\texpint}{\texp\!\!\int_{0}^{1}}
\DeclareMathOperator{\texpintt}{\texp\!\!\int_{0}^{\mathit{t}}}
\DeclareMathOperator{\Paths}{\mathbf{Path}}
\DeclareMathOperator{\Frame}{\Phi}
\newcommand{\mdi}{\texpint}
\newcommand{\mdit}{\texpintt}
\newcommand{\texpinta}[2]{\texp\!\!\int_{#1}^{#2}}
\DeclareMathOperator{\curv}{\mathbf{curv}}
\begin{document}

%\author{Theodore Voronov}
%\address{School of Mathematics, University of Manchester,
%Oxford Road, Manchester, M13 9PL, United Kingdom}
%\email{theodore.voronov@manchester.ac.uk}
%\title[On a non-Abelian Poincar\'{e} lemma]{On a non-Abelian Poincar\'{e} lemma}
%\date{21 October (3 November) 2010}

%\subjclass[2010]{Primary 53D17. Secondary 18G50, 58A50, 58C50, 58H05}
%

%\keywords{Maurer--Cartan equation, Lie superalgebras, differential forms, supermanifolds, Lie algebroids, homological vector fields, multiplicative integral, $Q$-manifolds, Quillen's superconnection}

%\date{}

\begin{abstract}
We show that a well-known result on solutions of the Maurer--Cartan
equation extends to arbitrary (inhomogeneous) odd forms:  any such
form with values in a Lie superalgebra satisfying $d\o+\o^2=0$ is
gauge-equivalent to a constant, $$\o=gCg^{-1}-dg\,g^{-1}\,.$$ This
follows from a non-Abelian version of a chain homotopy formula
making use of multiplicative integrals. An application to Lie
algebroids and their non-linear analogs is given.

Constructions presented here generalize to an abstract setting of differential Lie superalgebras where we arrive at the statement that odd elements (not necessarily satisfying the Maurer--Cartan equation) are homotopic\,---\,in a certain particular sense\,---\,if and only if they are gauge-equivalent.
\end{abstract}
\maketitle

\section{Introduction}

It is well known that a $1$-form with values in a Lie algebra
$\mathfrak{g}$  satisfying the Maurer--Cartan
equation
\begin{equation}\label{intro_eq.mc1}
    d\o+\frac{1}{2}[\o,\o]=0
\end{equation}
possesses a `logarithmic primitive', locally or for a
simply-connected domain:
\begin{equation}\label{intro_eq.puregauge}
    \o=-dg\,g^{-1}
\end{equation}
for some $G$-valued function, where $G$ is a Lie group with the Lie
algebra $\mathfrak{g}$. Here and in the sequel we write all formulas
as if   our algebras and groups were matrix; it makes the equations
more transparent and certainly nothing prevents   from rephrasing them
in an abstract form. Therefore the main
equation~\eqref{intro_eq.mc1} will be also  written as
\begin{equation} \label{intro_eq.mc2}
    d\o+\o^2=0\,.
\end{equation}
In the physical parlance, \eqref{intro_eq.puregauge} means that the
$\mathfrak{g}$-valued  $1$-form $\o$ is a `pure gauge', i.e., the
gauge potential $\o$ is gauge-equivalent to zero.
Mathematically~\eqref{intro_eq.mc1} means that the operation
\begin{equation}\label{intro_eq.covder}
    d+\o
\end{equation}
defines a flat connection  and $g=g(x)$ in \eqref{intro_eq.puregauge} is
a choice of a parallel frame by which $d+\o$ can be reduced to the
trivial connection $d$.

On the other hand, in the Abelian case (for example, for
scalar-valued forms), the Maurer--Cartan equation becomes simply
$d\o=0$, the equation~\eqref{intro_eq.puregauge} becomes $\o=-d\ln
g=df$ for a non-zero function $g(x)=e^{-f(x)}$, and the whole
statement is just a particular case of the Poincar\'{e} lemma for
$1$-forms.

We   show that there is a non-Abelian analog of the Poincar\'{e}
lemma in full generality. Namely, instead of a \textbf{one}-form we
consider an arbitrary  \textbf{odd}  form $\o$ with values in a Lie
superalgebra $\mathfrak{g}$. `Odd' here means, in the sense of total
$\Z$-grading (parity) taking account of   parity of elements of the
Lie superalgebra. The form may very well be inhomogeneous w.r.t.
degree, i.e., be the sum of a $0$-form, a $1$-form, a $2$-form,
etc., or even be a pseudodifferential form on a supermanifold.

The statement then is as follows. If the odd form $\o$
satisfies~\eqref{intro_eq.mc1} or~\eqref{intro_eq.mc2}, then it is
gauge-equivalent to a constant: there is a $G$-valued form $g$
(necessarily even) such that
\begin{equation}\label{intro_eq.constant}
    \o=gCg^{-1}-dg\,g^{-1}\,,
\end{equation}
where $C$ is a constant odd element of the superalgebra
$\mathfrak{g}$ satisfying \mbox{$C^2=0$}. %(a {`homological element'} in $\mathfrak{g}$).
Here $G$ is a Lie supergroup corresponding to the
Lie superalgebra $\mathfrak{g}$. (Note that $g$ is not a function as in the classical situation, but a form. How to understand  such `$G$-valued forms' is explained in the main text.)

This is a precise analog of the statement that every closed form is
exact (in a contractible domain) plus a constant. The appearance of
a constant $C$ in~\eqref{intro_eq.constant} is a crucial
non-trivial feature, distinguishing our statement from the
well-known case when $\o$ is a $\mathfrak{g}$-valued $1$-form. The arising of a constant may seem a very innocent change, for a superficial glance, but examples show that it is not. (In fact, this odd $\mathfrak{g}$-valued constant may be a whole homological vector field defining a particular structure, see  section 4.)

From a `connections viewpoint', the meaning of our statement is that
the operation~\eqref{intro_eq.covder}, which now can be interpreted
as, say, Quillen's superconnection~\cite{quillen:superconnection85}
(it is not an ordinary connection if $\o$ is not a $1$-form), is
equivalent, by taking the conjugation with a group element $g$ (an
invertible even form taking values in $G$), to
\begin{equation*}
    d+C\,.
\end{equation*}

We discuss an application to Lie algebroids and
their non-linear analogs. No
doubt, the statement has other applications.

Our  `non-Abelian Poincar\'{e} lemma' is deduced from a more general theorem, which may
be seen as the non-Abelian analog of a homotopy formula for differential
forms. In the last section we discuss a relation with homotopy theory in a more abstract setup of differential Lie superalgebras and differential Lie supergroups.

\section{`Multiplicative direct image' and a homotopy
formula}\label{sec.multi}

Consider a (super)manifold $M$ and the direct product $M\times I$
where $I=[0,1]$.  Let $\mathfrak{g}$ be a Lie superalgebra with a
Lie supergroup $G$.

\begin{rem}
The Lie superalgebra $\mathfrak{g}$ may be finite-dimensional, but
not necessarily. Of course, for infinite-dimensional algebras,
interesting for applications, the existence of a corresponding
supergroup $G$ and of multiplicative integrals, see below, need to
be established in each concrete case.  One particular class of examples to which considerations of this paper readily apply, consists of groups (or supergroups) of transformations of a supermanifold.  As mentioned above, in the
sequel we use the notation mimicking the case of matrix algebras and
matrix groups, but everything makes sense in the abstract setting.
\end{rem}

We shall define a map
\begin{equation*}%\label{multi_eq.multdirimage}
    \O^{\text{odd}}(M\times I,\mathfrak{g})\to
    \O^{\text{even}}(M,G)
\end{equation*}
from odd $\mathfrak{g}$-valued forms on $M\times I$ to even
$G$-valued forms on $M$, corresponding to the projection $p\co M\times
I\to M$. Here a rigorous understanding of an `even form with values
in some (super)manifold', say, $N$ (in our case it is a supergroup
$G$), is a map of supermanifolds $\Pi TM\to N$ (in the case under
consideration, a map $\Pi TM\to G$).

\smallskip
{

 \small

For a matrix (super)group  $G$, a $G$-valued form $g=g(x,dx)$ on an
ordinary manifold is an expansion $g=g_0(x)+dx^ag_a(x)+\ldots$ where
the zero-order term is an invertible matrix-function such that
$g_0(x)$ belongs to $G$ for each $x$,  and the other terms are
`higher corrections'. Note however that the whole sum $g(x,dx)$, not
only $g_0(x)$, has to satisfy the equations specifying the group
manifold $G$, so there are relations for the higher terms as well.
The whole sum must be of course even in the sense of total parity.

}
\smallskip

The desired map will be called the \textit{multiplicative direct
image} or \textit{multiplicative fiber integral} and denoted
\begin{equation}\label{multi_eq.multdirimage}
  \texpp\co \O^{\text{odd}}(M\times I,\mathfrak{g})\to
   \O^{\text{even}}(M,G)\,.
\end{equation}
As the name suggests, the construction of the multiplicative
direct image goes as follows. For a given $\o\in
\O^{\text{odd}}(M\times I,\mathfrak{g})$, we consider the
decomposition
\begin{equation*}
    \o=\o_0+dt\,\o_1
\end{equation*}
where $t$ is the coordinate on $I$. The form $\o_1$ is an even
$\mathfrak{g}$-valued form on $M$ depending on $t$ as a parameter
or, alternatively, it can be regarded as a function of $t\in I$ with
values in the Lie algebra (not superalgebra!)
$\O^{\text{even}}(M,\mathfrak{g})$. It makes sense to consider the
Cauchy problem
\begin{equation}\label{multi_eq.cauchy}
    \left\{
    \begin{aligned}
        g(0)&=1\,,\\
        \oder{g(t)}{t}&=-\o_1g(t)\,
    \end{aligned} \right.
\end{equation}
(the choice of the minus sign is dictated by the geometric
tradition). The solution $g(t)$, which takes values in the group
$\O^{\text{even}}(M,G)$, will be denoted
\begin{equation}\label{multi_eq.multdirimageatt}
    \mdit\!(-\o):=g(t)\,,
\end{equation}
for $t\in[0,1]$.  We may also need the solution at time
$t = t_1$ of the   Cauchy problem with the initial value $1$ at $t=t_0$, which will be denoted $g(t_1,t_0)$\,:
\begin{equation}\label{multi_eq.multdirimagegen}
    g(t_1,t_0)=\texpinta{t_0}{t_1}(-\o)\,,
\end{equation}
for $t_0, t_1\in [0,1]$, so that $g(t)=g(t,0)$\,.
In particular, we define
\begin{equation}\label{multi_eq.multdirimageat1}
    \o \mapsto \texpp\o:= g(1,0)=\mdi\!(-\o) \in \O^{\text{even}}(M,G)
\end{equation}
to be the desired map~\eqref{multi_eq.multdirimage}.

\begin{rem} The multiplicative integrals above are standard
multiplicative integrals (cf.~\cite{gantmacher:appl},
\cite{manturov:multint90}) and if so wished they can be expressed as
limits of products of exponentials or as series of integrals of
time-ordered products. We use  the ``$\texp\!\int$'' notation  from among the
variety of existing notations. In our case the integrands are
$1$-forms on $I$ with values in a Lie algebra. The corresponding Lie
group $\O^{\text{even}}(M,G)$ is the group of the $\O(M)$-points of
the Lie supergroup $G$ (that is,   maps $\Pi TM\to G$). More detailed notations %for our integrals
such as
\begin{equation*}
    \texpinta{t_0}{t^1}\!dt\,\bigl(-\o_1(x,dx,t)\bigr) \quad \text{or} \quad
    \texpinta{t_0}{t^1}\!D(t,dt)\,\bigl(-\o(x,dx,t,dt)\bigr)
\end{equation*}
are also possible. In the latter formula $D(t,dt)$ stands for the
Berezin integration element w.r.t. the variables $t,dt$ and the role of
the Berezin integration over the odd variable $dt$ is in isolating
the term $\o_1$.
\end{rem}

For an   Abelian $\mathfrak g$, the multiplicative direct image $\texpp$ is   the
composition of the ordinary direct image $p_*=\int_0^1$ taking forms
on $M\times I$ to forms on $M$ (with the same values) and the
exponential map.

We shall now establish an analog of the fiberwise Stokes formula for
the ordinary direct image
\begin{equation}\label{multi_eq.abelstokes}
    d\circ p_*+p_*\circ d= p'_*\,.
\end{equation}
Here $p'_*$ stands for the `integral over
the fiberwise boundary', so that, for a form $\s(x,dx,t,dt)$ on $M\times I$,\,  $p'_*\s=\s(x,dx,t,0)|^{t=1}_{t=0}$.

For such an analog, the first term in
equation~\eqref{multi_eq.abelstokes} should be replaced by the
multiplicative direct image followed by the Darboux differential
$g\mapsto \D (g)=-dg\,g^{-1}$, while the operator $d$ in the second
term should be replaced by the non-linear operation of `taking
curvature':
\begin{equation}
    \o\mapsto \curv\o:=d\o+\o^2\,.
\end{equation}
As we shall see,  certain other modifications  arise for  both sides and the  formula as a whole is slightly more complicated than a
naive analog of~\eqref{multi_eq.abelstokes}. (At the same time, in
hindsight all extra complications allow a geometric
interpretation and are geometrically natural.)

\begin{thm} \label{multi_thm.nonabhomotopy}
Let $\o=\o_0+dt\,\o_1\in \O^{\text{\emph{odd}}}(M\times I,
\mathfrak{g})$ be an odd $\mathfrak{g}$-valued form on the direct
product $M\times I$. Define $g(t_1,t_0)$ as above.  %Denote by $h=h(t)$ the following  function of $t$ taking values in $\O^{\text{even}}(M,G)$\,: \begin{equation}\label{multi.eq.h} h(t):=g(1,t)=\texpinta{t}{1}(-\o)\,. \end{equation}
The following commutation formula holds:
\begin{equation}\label{multi_eq.nonabelstokes1}
\boxed{
    \D\circ \texpp + p_*\circ \Ad g(1,t) \circ \curv = p'_* \circ \Ad g(1,t)\,,\vphantom{\int_0^1}
}
\end{equation}
where $p'_*=|^{t=1, dt=0}_{t=0, dt=0}$ is the   `integral over the fiberwise boundary'.

If we denote $g:=\texpp \o=g(1,0)$ and   $\O:=\curv\o$\,, then the   formula reads
\begin{equation}\label{multi_eq.nonabelstokes}
\boxed{
    -dg\,g^{-1} + \int_0^1\!\! g(1,t)\,\O\, g(1,t)^{-1}
    ={\o_0}|_{t=1}-g\bigl({\o_0}|_{t=0}\bigr)g^{-1}\,.
}
\end{equation}
Here $p_*=\int_0^1$ in the second term is the ordinary fiberwise
integral applied to a $\mathfrak{g}$-valued form on $M\times I$.
\end{thm}

\begin{rem} The form $\o$ is odd; obviously, the $G$-valued form $g$
is even, hence the form $-dg\,g^{-1}$ is odd. The form $d\o+\o^2$ is
even and the fiberwise integration makes an even form, odd. The term
$\o_0$ is odd. Therefore the main
equation~\eqref{multi_eq.nonabelstokes} is an equality between odd
$\mathfrak{g}$-valued forms on $M$.
\end{rem}

\begin{rem} The appearance of the conjugation with $g(1,t)$  or $g=g(1,0)$ in~\eqref{multi_eq.nonabelstokes1}, \eqref{multi_eq.nonabelstokes}  becomes  geometrically transparent
if one thinks of $g(t_1,t_0)$ as of a `parallel transport' along $\{x\}\times I$ from $t=t_0$ to  $t=t_1$ (see more in Section~\ref{sec.discussion}). The whole formula~\eqref{multi_eq.nonabelstokes} may be viewed as written at the time $t=1$.
\end{rem}

\begin{proof}[Proof of Theorem~\ref{multi_thm.nonabhomotopy}]
Equation~\eqref{multi_eq.nonabelstokes} is an explication of~\eqref{multi_eq.nonabelstokes1}, so we need to prove~\eqref{multi_eq.nonabelstokes}.
We shall deduce a formula equivalent
to~\eqref{multi_eq.nonabelstokes}, from
which~\eqref{multi_eq.nonabelstokes} will follow by conjugation:
\begin{equation}\label{multi_eq.nonabelstokes2}
    -g^{-1}dg + \int_0^1\!\! g(0,t)^{-1}\,\O\, g(0,t)
    =g^{-1}\bigl({\o_0}|_{t=1}\bigr)g-{\o_0}|_{t=0}\,.
\end{equation}
For brevity we shall use the notation $g_t=g(0,t)$. Consider the first
term in~\eqref{multi_eq.nonabelstokes2}. To obtain an expression for
it, notice that $g^{-1}dg=(g_t^{-1}d_xg_t)|_{t=1}$ and
$d_xg_t|_{t=0}=0$. By differentiating and
using~\eqref{multi_eq.cauchy} we arrive after a simplification at
\begin{equation*}
    \oder{}{t}\left(g_t^{-1}d_xg_t\right)=-g_t^{-1}d_x\o_1g_t\,.
\end{equation*}
Therefore
\begin{equation}\label{multi_eq.ginvdg}
    g^{-1}dg=\int_0^1dt
    \oder{}{t}\left(g_t^{-1}d_xg_t\right)=\int_0^1dt\left(-g_t^{-1}d_x\o_1g_t\right)\,.
\end{equation}
Consider now the curvature $\O=d\o+\o^2$. We have
\begin{equation}\label{multi_eq.curv}
    d\o+\o^2=d_x\o_0+\o_0^2+dt\Bigl(-d_x\o_1+\dot\o_0+[\o_1,\o_0]\Bigr)\,,
\end{equation}
by a direct calculation (where the dot denotes the time derivative).
Therefore
\begin{equation}\label{multi_eq.intofcurv}
    \int_0^1g_t^{-1}\O g_t=\int_0^1dt\left(-g_t^{-1}d_x\o_1g_t\right)+ \int_0^1dt\Bigl(g_t^{-1}\bigl(\dot \o_0+[\o_1,\o_0]\bigr)g_t\Bigr)\,.
\end{equation}
It follows by combining~\eqref{multi_eq.ginvdg}
and~\eqref{multi_eq.intofcurv} that
\begin{equation}\label{multi_eq.lhs}
    -g^{-1}dg+\int_0^1g_t^{-1}\O g_t=\int_0^1dt\Bigl(g_t^{-1}\bigl(\dot \o_0+[\o_1,\o_0]\bigr)g_t\Bigr)\,.
\end{equation}
It remains to identify the r.h.s. of~\eqref{multi_eq.lhs}. To this
end, consider $g_t^{-1}\o_0g_t$. By differentiating and taking into
account~\eqref{multi_eq.cauchy} we obtain, after a simplification,
that
\begin{equation}\label{multi_eq.rhs}
    \oder{}{t}\left(g_t^{-1}\o_0g_t\right)=g_t^{-1}\bigl(\dot \o_0+[\o_1,\o_0]\bigr)g_t\,.
\end{equation}
This is exactly what we are looking for. Combining
now~\eqref{multi_eq.lhs} with~\eqref{multi_eq.rhs} we arrive at
\begin{equation}\label{multi_eq.almostfinal}
    -g^{-1}dg+\int_0^1g_t^{-1}\O g_t=\int_0^1dt
    \oder{}{t}\left(g_t^{-1}\o_0g_t\right)\,,
\end{equation}
which gives~\eqref{multi_eq.nonabelstokes2} as desired, since
$g_1=g$ and $g_0=1$. To get~\eqref{multi_eq.nonabelstokes} we apply
the conjugation by the element $g=g(1,0)$, and the theorem is proved.
\end{proof}

Theorem~\ref{multi_thm.nonabhomotopy} implies an analog of the chain
homotopy formula for homotopic maps $M\to N$. Suppose $F\co M\times
I\to N$ is a homotopy between maps $f_0,f_1\co M\to N$, so that
$F(x,t)=f_t(x)$ for $t=0,1$. Consider an odd form $\o\in
\O^{\text{odd}}(N,\mathfrak{g})$ and take its pull-back $F^*\o\in
\O^{\text{odd}}(M\times I,\mathfrak{g})$. By applying
equation~\eqref{multi_eq.nonabelstokes} to $F^*\o$ and noting that
taking curvature commutes with pull-backs, we arrive at the
following statement.

\begin{cor}[Non-Abelian algebraic homotopy] \label{multi_cor.nonabhomotopy}The pull-backs along
homotopic maps $f_0, f_1\co M\to N$ of an odd $\mathfrak{g}$-valued
form $\o\in \O^{\text{\emph{odd}}}(N,\mathfrak{g})$ are related by
the formula
\begin{equation}\label{multi_eq.nonabelhomotopy}
    f_1^*\o-g (f_0^*\o) g^{-1}=-dg\,g^{-1} + \int_0^1\!\! g(1,t)\,F^*(\curv \o)\, g(1,t)^{-1}
\end{equation}
where $g=g(1,0)$ and
\begin{equation}\label{multi_eq.nonabelhomotopygt}
    g(t_1,t_0)=\texpinta{t_0}{t_1} (-F^*\o)\,.
\end{equation}
Here $F$ is a given homotopy, $F(x,t)=f_t(x)$.

In particular, for flat forms, the pull-backs along homotopic maps
are gauge-equivalent:
\begin{equation}\label{multi_eq.nonabelhomotopyflat}
    f_1^*\o=g (f_0^*\o) g^{-1}-dg\,g^{-1}\,,
\end{equation}
where `flat' means vanishing curvature: $\curv \o=d\o+\o^2=0$.  %\qed
\end{cor}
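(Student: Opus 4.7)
The plan is to derive this statement directly by applying Theorem~\ref{multi_thm.nonabhomotopy} to the pulled-back form $F^*\o$ on $M\times I$. Since $\o$ is odd and pull-back preserves total parity, $F^*\o$ is an odd $\mathfrak{g}$-valued form on $M\times I$, so the hypotheses of the theorem are satisfied verbatim. The element $g=g(1)$ appearing in the corollary is then precisely the multiplicative integral $\mdi(-F^*\o)$ produced by the theorem, and $g(t)=\mdit(-F^*\o)$ is the truncated transport as in~\eqref{multi_eq.nonabelhomotopygt}.

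The key functoriality property I would invoke is that pull-back along the smooth map $F\co M\times I\to N$ is a morphism of differential graded superalgebras with values in $\mathfrak{g}$: it commutes with $d$ and with the quadratic operation $\o\mapsto\o^2$. Consequently,
\begin{equation*}
d(F^*\o)+(F^*\o)^2=F^*(d\o+\o^2)\,,
\end{equation*}
which converts the curvature term $\O=d(F^*\o)+(F^*\o)^2$ inside the fiber integral of~\eqref{multi_eq.nonabelstokes} into $F^*(d\o+\o^2)$, giving the second term on the right-hand side of~\eqref{multi_eq.nonabelhomotopy}.

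For the boundary contributions, write $F^*\o=\sigma_0+dt\,\sigma_1$ in the decomposition used in the theorem. The restriction of $\sigma_0$ to $t=j$ is obtained by setting $t=j$ and $dt=0$ in $F^*\o$, which is precisely the pull-back of $\o$ along $F|_{M\times\{j\}}=f_j$; that is, $\sigma_0|_{t=j}=f_j^*\o$ for $j=0,1$. Substituting into~\eqref{multi_eq.nonabelstokes} yields
\begin{equation*}
-dg\,g^{-1}+\int_0^1\!\! gg(t)^{-1}\,F^*(d\o+\o^2)\,g(t)g^{-1}=f_1^*\o-g(f_0^*\o)g^{-1}\,,
\end{equation*}
which is~\eqref{multi_eq.nonabelhomotopy} after rearrangement. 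The flat case~\eqref{multi_eq.nonabelhomotopyflat} is then immediate, since the assumption $d\o+\o^2=0$ kills the integrand in the second term.

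There is no real obstacle here; the result is a straightforward consequence of Theorem~\ref{multi_thm.nonabhomotopy} together with naturality of the construction under pull-backs. The only point worth being explicit about is the compatibility of pull-back with the non-linear curvature operation, which follows from pull-back being a morphism of graded algebras evaluated term-by-term on $d\o$ and on $\o\cdot\o$.
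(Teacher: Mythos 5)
Your proposal is correct and follows exactly the paper's own route: the paper likewise obtains the corollary by applying equation~\eqref{multi_eq.nonabelstokes} of Theorem~\ref{multi_thm.nonabhomotopy} to $F^*\o$, noting that `taking curvature' commutes with pull-backs and that the boundary terms ${\o_0}|_{t=j}$ become $f_j^*\o$. Your explicit check that $d(F^*\o)+(F^*\o)^2=F^*(d\o+\o^2)$ and the identification of the $t=0,1$ restrictions are exactly the (tacit) steps the paper relies on.
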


\begin{rem}\label{multi_rem.abstract}
Constructions of this section can be done in a more general abstract setup. Namely, we can replace  $\O(M,\mathfrak g)$ by an abstract differential Lie superalgebra. Suppose  $\boldsymbol{\mathfrak g}$ (we use boldface) is such a differential Lie superalgebra (shortly, a `dLie algebra'). Then the even forms on $M$ with values in $G$ are replaced, respectively, by points of a corresponding `differential Lie supergroup' (`dLie group' or   `$Q$-group') $\boldsymbol{G}$.  Odd forms on the cylinder $M\times I$ are modeled, in such an abstract setting, by the elements of the tensor product   $\Pi \boldsymbol{\mathfrak g}\otimes \O(I)=\O(I,\Pi \boldsymbol{\mathfrak g})$. The construction of the multiplicative direct image and the analog  of  Theorem~\ref{multi_thm.nonabhomotopy}   carry through without any substantial change. The multiplicative direct image should be regarded as a supermanifold map $\O(I,\Pi \boldsymbol{\mathfrak g})\to \boldsymbol{G}$. Since our motivation is mainly differential-geometric, we stick to the concrete setting of differential forms. However, the abstract formulation may be of independent interest. We briefly discuss it in the last section.
\end{rem}

\section{Non-Abelian  Poincar\'{e} lemma}

Results of the previous section can be applied for obtaining a non-Abelian version of the
Poincar\'{e} lemma. We can use
Corollary~\ref{multi_cor.nonabhomotopy} or argue directly, as
follows. Let $\o$ be an odd $\mathfrak{g}$-valued form on a
contractible supermanifold, for example on a star-shaped domain
$U\subset \R{n|m}$. Suppose it satisfies the Maurer--Cartan
equation:
\begin{equation}\label{nonabelpoinc_eq.mc}
    d\o+\o^2=0\,.
\end{equation}
Consider a contracting homotopy $H\co M\times I\to M$, for example,
the map $H\co U\times I\to U$ sending $(x,t)$ to $tx$. Take the
pull-back $H^*\o$ of $\o$ and apply to it the main
formula~\eqref{multi_eq.nonabelstokes}. We arrive at
\begin{equation*}
    -dg\,g^{-1}+0 = (H^*\o)|_{t=1, dt=0}- g\bigl((H^*\o)|_{t=0, dt=0}\bigr)g^{-1}
\end{equation*}
where
\begin{equation} \label{nonabelpoinc_eq.nabpoinc-primit}
    g=\mdi\! (-H^*\o)\,.
\end{equation}
Noticing that $(H^*\o)|_{t=1, dt=0}=\o$ and $(H^*\o)|_{t=0,
dt=0}=i^*\o$ where $i$ is the inclusion of the base point to $M$, we
get simply
\begin{equation*}
    -dg\,g^{-1} = \o- g(i^*\o) g^{-1}\,.
\end{equation*}
Here $i^*\o$ is just an odd element of the Lie superalgebra
$\mathfrak{g}$.

We have proved the following statement.

\begin{thm}[Non-Abelian Poincar\'{e} Lemma]
\label{nonabelpoinc_thm.napoinlemma} On a contractible
supermanifold, an odd $\mathfrak{g}$-valued form $\o\in
\O^{\text{\emph{odd}}}(M,\mathfrak{g})$ satisfying the
Maurer--Cartan equation~\eqref{nonabelpoinc_eq.mc} is
gauge-equivalent to a constant:
\begin{equation}\label{nonabelpoinc_eq.nabpoinc}
    \o=gCg^{-1}-dg\,g^{-1}\,,
\end{equation}
where $C\in \mathfrak{g}_{\bar 1}$ is an odd   element of the Lie
superalgebra $\mathfrak{g}$ and a `multiplicative primitive' $g\in
\O^{\text{\emph{even}}}(M,G)$ of the form $\o$ is given
by~\eqref{nonabelpoinc_eq.nabpoinc-primit}.%\qed
\end{thm}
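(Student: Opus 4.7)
The plan is to apply the non-Abelian homotopy formula of Theorem~\ref{multi_thm.nonabhomotopy} (equivalently, Corollary~\ref{multi_cor.nonabhomotopy}) to a contracting homotopy. Since $M$ is contractible, pick a smooth map $H\colon M\times I\to M$ with $H(\cdot,1)=\mathrm{id}_M$ and $H(\cdot,0)=i\circ p$, where $p\colon M\to\{x_0\}$ collapses $M$ to a chosen base point and $i\colon\{x_0\}\hookrightarrow M$ is the inclusion; on a star-shaped domain $U\subset\R{n|m}$ the explicit scaling $H(x,t)=tx$ does the job.

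Next I would pull $\o$ back to $H^*\o\in\O^{\text{odd}}(M\times I,\mathfrak{g})$ and apply formula~\eqref{multi_eq.nonabelstokes} to it. Since exterior differentiation and the bracket commute with pull-backs, flatness survives: $d(H^*\o)+(H^*\o)^2=H^*(d\o+\o^2)=0$. The integral term on the left-hand side of~\eqref{multi_eq.nonabelstokes} therefore vanishes, collapsing the formula to
\begin{equation*}
    -dg\,g^{-1} = (H^*\o)_0\big|_{t=1} - g\,\bigl((H^*\o)_0\big|_{t=0}\bigr)\,g^{-1},
\end{equation*}
with $g:=\mdi\!(-H^*\o)$ as in~\eqref{nonabelpoinc_eq.nabpoinc-primit}. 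It then remains to read off the two boundary terms: $H(\cdot,1)=\mathrm{id}_M$ gives $(H^*\o)_0|_{t=1}=\o$, while $H(\cdot,0)$ being a constant map gives $(H^*\o)_0|_{t=0}=i^*\o$, which is a pull-back to a single point and hence a genuine odd element of $\mathfrak{g}$. Setting $C:=i^*\o$ and rearranging yields~\eqref{nonabelpoinc_eq.nabpoinc}.

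As a byproduct one obtains $C^2=0$ automatically: pulling the Maurer--Cartan equation back by $i$ gives $i^*(d\o)+(i^*\o)^2=dC+C^2=C^2=0$, since $dC=0$ on a point. The main obstacle of the argument is not computational --- the heavy lifting was done in proving Theorem~\ref{multi_thm.nonabhomotopy} --- but rather logistical: one has to verify that the multiplicative integral $g$ is well-defined and lands in the group $\O^{\text{even}}(M,G)$ of $G$-valued even forms on $M$. For a star-shaped domain this is immediate from solving the Cauchy problem~\eqref{multi_eq.cauchy} in the (pro-nilpotent, in each form-degree) Lie algebra $\O^{\text{even}}(M,\mathfrak{g})$; in the general contractible case one further needs the chosen retraction $H$ and the supergroup $G$ to be well-enough behaved for the multiplicative integral to converge, a point the paper handles through its earlier remarks on multiplicative integrals.
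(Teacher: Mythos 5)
Your proposal is correct and follows essentially the same route as the paper: pull $\o$ back along a contracting homotopy, apply the homotopy formula of Theorem~\ref{multi_thm.nonabhomotopy}, observe that flatness kills the integral term, and identify the boundary terms as $\o$ and $C=i^*\o$. Your derivation of $C^2=0$ by pulling back the Maurer--Cartan equation to the base point is a slightly more direct argument than the paper's (which invokes invariance of the Maurer--Cartan equation under gauge transformations in a separate Proposition), but the substance is the same.
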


In particular,  the formula for a `multiplicative primitive' of an
odd form satisfying the Maurer--Cartan equation on a star-shaped
domain of $\R{n|m}$, is
\begin{multline} \label{nonabelpoinc_eq.nabpoinc-primitstar}
    g=\mdi\!\! (-D(t,dt)\,\o(tx,dt\, x+t\,dx)) \\
    =\mdi\!\! \left(-dt\,x^a\der{\o}{dx^a}(tx,t\,dx)\right)\,,
\end{multline}
very similar to the classical formula for a primitive of a closed
form on $\R{n}$ or $\R{n|m}$.

Since gauge transformations preserve the Maurer--Cartan equation,
the constant element $C\in \mathfrak{g}_{\bar 1}$ must also satisfy
it. Hence the following holds.

\begin{prop} The constant  $C$ (an odd   element of the Lie
superalgebra $\mathfrak{g}$)
in~\eqref{nonabelpoinc_eq.nabpoinc} satisfies
\begin{equation}\label{nonabelpoinc_eq.homologicc}
    C^2=0 \quad \text{(or $[C,C]=0$)}\,,
\end{equation}
i.e., $C$ is a homological element of the Lie superalgebra
$\mathfrak{g}$. %\qed
\end{prop}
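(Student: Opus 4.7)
The plan is to exploit the gauge covariance of the ``curvature'' operation $\omega\mapsto d\omega+\omega^2$, a fact already invoked informally in the discussion preceding the proposition. Concretely, I would first verify (or recall) the identity
\begin{equation*}
    d\bigl(h\tilde\omega h^{-1}-dh\,h^{-1}\bigr) + \bigl(h\tilde\omega h^{-1}-dh\,h^{-1}\bigr)^2 = h\,\bigl(d\tilde\omega+\tilde\omega^2\bigr)\,h^{-1}\,,
\end{equation*}
valid for any odd $\mathfrak{g}$-valued form $\tilde\omega$ and any invertible even $G$-valued form $h$. This reduces to a direct computation: the graded Leibniz rule applied to $d(h\tilde\omega h^{-1})$ produces cross terms involving $A:=dh\,h^{-1}$ that cancel the mixed terms in the expansion of $(h\tilde\omega h^{-1}-A)^2$, while the surviving pieces $A^2-dA$ annihilate each other by the Maurer--Cartan identity for $A$ itself, leaving only the conjugated curvature of $\tilde\omega$.

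Next I would apply this identity to the presentation \eqref{nonabelpoinc_eq.nabpoinc} furnished by Theorem~\ref{nonabelpoinc_thm.napoinlemma}, taking $h=g$ and $\tilde\omega=C$. Since $C$ is a constant element of $\mathfrak{g}$, we have $dC=0$, so the right-hand side collapses to $g\,C^2\,g^{-1}$. Combined with the Maurer--Cartan hypothesis $d\omega+\omega^2=0$, this yields $g\,C^2\,g^{-1}=0$, and invertibility of $g$ in $\Omega^{\text{even}}(M,G)$ gives $C^2=0$. The equivalent bracket form $[C,C]=0$ follows immediately, since $C$ is odd and $C^2=\tfrac12[C,C]$ for odd elements of a Lie superalgebra.

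The only step requiring genuine care is the gauge-covariance identity itself, where one must track parities in the graded Leibniz rule and be mindful of signs when expanding the square of an odd inhomogeneous form. Once that identity is in hand the conclusion is essentially automatic, and I do not anticipate any deeper obstacle.
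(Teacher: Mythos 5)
Your proposal is correct and is essentially the paper's own argument: the paper disposes of the proposition in one line by noting that gauge transformations preserve the Maurer--Cartan equation, which is exactly the gauge-covariance identity for the curvature that you verify explicitly before specializing to $h=g$, $\tilde\omega=C$ and using $dC=0$. You have merely written out the computation the paper leaves implicit, and the details (including $dA=A^2$ for $A=dh\,h^{-1}$ with $h$ even) check out.
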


If we split $\o$ and other forms as
%\begin{equation}\label{nonabelpoinc_eq.splitomega}
    $\o=\o_0+\o_{+}$,
%\end{equation}
where $\o_0=\o|_M$ or more precisely $\o_0=\pi^*i^*\o=$ (here
$\pi\co\Pi TM \to M$ is  the projection and $i\co M\to \Pi TM$,  the
zero section), then equation~\eqref{nonabelpoinc_eq.nabpoinc}
becomes
\begin{align}\label{nonabelpoinc_eq.splitomeganul}
    \o_0&=g_0Cg_0^{-1}\,, \\
    \label{nonabelpoinc_eq.splitomegaplus}
    \o_{+}&=\left(gCg^{-1}\right)_{+}-dg\,g^{-1}\,.
\end{align}
In general, $\o_0$ need not be constant;
equation~\eqref{nonabelpoinc_eq.splitomeganul} implies $\o_0$ being
`covariantly constant' w.r.t. to a flat connection:
$d\o_0+[\theta,\o_0]=0$ where $\theta=-dg_0\,g_0^{-1}$.

The constant $C$ is not unique. Our proof of
Theorem~\ref{nonabelpoinc_thm.napoinlemma} gives $C$ as the value of
$\o$ at the base point $x_0$ of a contractible manifold $M$. In
fact,  it can be replaced by  any constant conjugate to the value of
$\o$ at  $x_0$ w.r.t. the adjoint action of $G$. It is easy to
deduce the following statement.

\begin{cor} On a contractible manifold, two odd forms satisfying the
Maurer--Cartan equation are gauge-equivalent if and only if their
values at the base point are conjugate. %\qed
\end{cor}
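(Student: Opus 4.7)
The plan is to prove each direction separately, with the non-Abelian Poincar\'e lemma (Theorem~\ref{nonabelpoinc_thm.napoinlemma}) supplying the content for the nontrivial direction. Throughout I write $i\co\{x_0\}\hookrightarrow M$ for the inclusion of the base point, so that $i^*$ sends an odd $\mathfrak g$-valued form on $M$ to an element of $\mathfrak g_{\bar 1}$ by discarding all positive-degree components and evaluating at $x_0$. For the ``only if'' direction I would start from a gauge relation $\o'=h\o h^{-1}-dh\,h^{-1}$ with $h$ an even $G$-valued form and apply $i^*$ to both sides: since $dh$ has no $0$-form component, neither does $dh\,h^{-1}$, so $i^*(dh\,h^{-1})=0$. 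Writing $g_0:=i^*h\in G$, this gives $i^*\o'=g_0(i^*\o)g_0^{-1}$, the asserted conjugacy.

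For the converse I would apply Theorem~\ref{nonabelpoinc_thm.napoinlemma} to $\o$ and to $\o'$, producing even $G$-valued forms $g,g'$ such that
\[
\o=gCg^{-1}-dg\,g^{-1},\qquad \o'=g'C'(g')^{-1}-dg'\,(g')^{-1},
\]
with $C=i^*\o$ and $C'=i^*\o'$. In particular $\o$ is gauge-equivalent to the constant odd form $C$ on $M$, and likewise $\o'$ to $C'$. If $C'=g_0 C g_0^{-1}$ for some $g_0\in G$, then viewing $g_0$ as a constant (hence closed) even $G$-valued $0$-form on $M$ promotes the adjoint identity $g_0 C g_0^{-1}-dg_0\,g_0^{-1}=C'$ into an honest gauge transformation from $C$ to $C'$. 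Concatenating the chain $\o\sim C\sim C'\sim\o'$ then yields a single gauge equivalence between $\o$ and $\o'$.

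The only computational obstacle, and a very mild one, is checking that composing gauge transformations produces a gauge transformation given by the product of group elements in the appropriate order; this amounts to the routine identity $k(h\o h^{-1}-dh\,h^{-1})k^{-1}-dk\,k^{-1}=(kh)\o(kh)^{-1}-d(kh)(kh)^{-1}$, obtained from the Leibniz rule $d(kh)=dk\cdot h+k\,dh$ for even $k$. With that bookkeeping settled, transitivity of gauge equivalence is automatic, and the corollary follows from Theorem~\ref{nonabelpoinc_thm.napoinlemma} together with the elementary forward direction.
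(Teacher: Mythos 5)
Your proof is correct and follows exactly the route the paper intends: the corollary is stated without proof (``It is easy to deduce\dots''), and the preceding paragraph makes clear the intended argument is precisely yours\,---\,reduce each form to the constant $i^*\o$ via Theorem~\ref{nonabelpoinc_thm.napoinlemma}, observe that constants conjugate by a fixed $g_0\in G$ are gauge-equivalent since $dg_0=0$, and get the forward direction by applying $i^*$ to a gauge relation. Your verification of the composition law for gauge transformations and of $i^*(dh\,h^{-1})=0$ supplies exactly the bookkeeping the paper leaves to the reader.
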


Hence there is a one-to-one correspondence between the gauge
equivalence classes of `flat forms' (or flat $G$-superconnections)
on a contractible manifold and the $G$-adjoint orbits of the
homological elements in $\mathfrak{g}$. More properly this should be
stated not as a bijection of sets, but as an isomorphism of the
corresponding functors so as to allow arbitrary families.

It is possible to have non-trivial gauge equivalences between
different constants, as well as self-equivalences. So the
multiplicative primitive $g$ in~\eqref{nonabelpoinc_eq.nabpoinc} is
not defined uniquely even for a fixed constant $C$. If
\begin{equation}\label{nonabelpoinc_eq.nabpoinc2}
    \o=hC'h^{-1}-dh\,h^{-1}\,,
\end{equation}
for some  other  $C'\in \mathfrak{g}_{\bar 1}$ and $h\in
\O^{\text{{even}}}(M,G)$, then there is a relation
\begin{equation}\label{nonabelpoinc_eq.nabpoinc3}
    C=k  C' k^{-1} -dk\,k^{-1}
\end{equation}
where $k=g^{-1}h$. The form $k\in \O^{\text{even}}(M,G)$   satisfies
\begin{equation}\label{nonabelpoinc_eq.gaugeofconst}
    [C,dk\,k^{-1}]+(dk\,k^{-1})^2=0\,.
\end{equation}
Suppose there is a gauge self-equivalence
\begin{equation}\label{nonabelpoinc_eq.selfgaugeofconst}
    C=gCg^{-1}-dg\,g^{-1}\,,
\end{equation}
for a homological element $C$.
Equation~\eqref{nonabelpoinc_eq.selfgaugeofconst} can be re-written
as
\begin{equation}\label{nonabelpoinc_eq.selfgaugeofconst2}
   dg=- Cg+gC \,,
\end{equation}
with the clear intuitive meaning of the `frame' $g$ being parallel
w.r.t. the (constant) `flat connection' $C$.

In the classical (Abelian) situation, the usual formulation of the
Poincar\'{e} lemma (``every closed form is exact'') is for forms of  fixed positive degree. Forms of
degree zero and  constants   appear as an exception unless
inhomogeneous forms are treated. In the non-Abelian case,
inhomogeneous $\mathfrak{g}$-valued forms do appear naturally. As
soon as we consider them, we are forced to consider constants
arising in~\eqref{nonabelpoinc_eq.nabpoinc}. These constants play an
important role in examples as we shall see  now.

\section{Examples}

In this section we  change our notation and denote by $L$ the Lie
superalgebra  denoted previously by $\mathfrak{g}$. (We shall need
$\mathfrak{g}$ for a different object.)

In the  following examples, the Lie superalgebra $L$ will be the
algebra of vector fields $\Vect(N)$ on a (super)manifold $N$. In
particular, $N$ can be   a vector space; then the Lie superalgebra
$L=\Vect(N)$ carries an extra $\ZZ$-grading corresponding to  degree
in the linear coordinates on $N$. It should not be confused with
parity. More generally, $N$ can be an arbitrary graded manifold,
i.e., a supermanifold with an extra $\ZZ$-grading in the structure
sheaf, independent of parity in general~\cite{tv:graded}. We refer
to such a $\ZZ$-grading as \emph{weight} and denote it $\w$.

\emph{All the considerations in the previous sections remain valid
in the graded case.}

Indeed,  suppose that the Lie superalgebra $L$  is $\ZZ$-graded.
Total weight of the elements of $\O(M,L)$ is  the sum of ordinary
degree of forms on $M$ and weight on $L$. For the operator $d+\o$ to
be homogeneous, we assume that the weight $\w(\o)$ of the form $\o$
equals $+1$ (this does not mean that $\o$ is a $1$-form!). Recalling
the  construction of the multiplicative direct image on $M\times I$
for $\o=\o_0+dt\,\o_1$ in Section~\ref{sec.multi}, we see that
$\w(\o_1)=0$. Hence the multiplicative integral there and all the
gauge transformations defined with its help have weight zero, i.e.,
they all preserve weights.

Now let us turn to examples. We use the notions of Lie algebroid theory, for which the standard and encyclopedic source is Mackenzie's book~\cite{mackenzie:book2005} (see also~\cite{moerdijk:mrcun-book}).

\begin{ex} Consider the \textbf{Atiyah algebroid} of a principal $G$-bundle
$P\to M$, which is a transitive Lie algebroid over $M$.
See~\cite{mackenzie:book2005}. (Here $G$ is a Lie group, nothing to
do with what it was in the previous sections.) It is defined as
$TP/G$ and it inherits the structure  of a vector bundle over $M$.
There is an epimorphism of vector bundles $TP/G\to TM$, which is the
anchor in the Lie algebroid structure. Let us reverse parity in the
fibers and consider the supermanifold $\Pi TP/G$. We can consider it
as a fiber bundle over $\Pi TM$,
\begin{equation}\label{exam.eq.atiyah}
    \Pi TP/G \to \Pi TM\,.
\end{equation}
The standard fiber of~\eqref{exam.eq.atiyah} can be identified with
$\Pi \mathfrak{g}$, where $\mathfrak{g}$ is the Lie algebra of $G$.
The transition functions have the form
\begin{equation}\label{exam.eq.atiyahtransitions}
    \xi_{\a}=g_{\a\b}\xi_{\b}g_{\a\b}^{-1}-dg_{\a\b}\,g_{\a\b}^{-1}\,,
\end{equation}
if $g_{\a\b}$ is the cocycle defining the principal bundle $P\to M$.
They are linear in $\xi, dx$, but affine in $\xi$ alone. Here $\xi$,
or $\xi_{\a}$ in a particular local trivialization, belongs to $\Pi
\mathfrak{g}$.  Hence~\eqref{exam.eq.atiyah} is an affine bundle
over $\Pi TM$. The Lie algebroid structure of the Atiyah algebroid
is encoded in the following homological vector field on $\Pi TP/G$:
\begin{equation} \label{exam.eq.atiyahQ}
    Q=dx^a\der{}{x^a}+\frac{1}{2}\xi^i\xi^jC_{ji}^k\der{}{\x^k}\,.
\end{equation}
Here $x^a$ are local  coordinates on $M$ and  $\x^i$ are linear
coordinates on $\Pi \mathfrak{g}$. The tensor $C_{ij}^k$ gives the
structure constants of the Lie algebra $\mathfrak{g}$. One may
consider the second term in~\eqref{exam.eq.atiyahQ} as a constant
element $C$ of the Lie superalgebra $L=\Vect(\Pi \mathfrak{g})$ of
the vector fields on the supermanifold $\Pi \mathfrak{g}$, so $Q$
has the appearance
\begin{equation} \label{exam.eq.atiyahQ2}
    Q=d+C \quad \text{where} \quad C\in \Vect(\Pi
\mathfrak{g})\,.
\end{equation}
Note that $C$ is odd and has weight $1$. It satisfies $C^2=0$. In a
different language, $C$ is the differential of the standard cochain
complex of $\mathfrak{g}$ usually denoted as $\delta$ (the
Chevalley--Eilenberg or Cartan differential). It is a remarkable
fact, directly verifiable, that the
decomposition~\eqref{exam.eq.atiyahQ2} survives transformations of
the form~\eqref{exam.eq.atiyahtransitions}.
\end{ex}

\begin{ex} Let $E\to
M$ be now an arbitrary transitive Lie algebroid,
see~\cite{mackenzie:book2005}.  The anchor map $E\to TM$ is
epimorphic, so we can again consider it as a fiber bundle. One can
check that it is an affine bundle. Consider the vector bundles over
$M$ with   reversed parity in the fibers. We have the affine bundle
\begin{equation}\label{exam.eq.transitive}
    \Pi E\to \Pi TM\,.
\end{equation}
If we denote   coordinates in the fiber
of~\eqref{exam.eq.transitive} by $\x^i$, and local coordinates on
$M$ by $x^a$ as above, so that on $\Pi TM$ the coordinates are
$x^a,dx^a,\x^i$, the changes of coordinates are
\begin{equation}\label{exam.eq.transitivecoorchanges}
    \left\{
    \begin{aligned}
        x^a&=x^a(x')\,,\\
        dx^a&=dx^{a'}\der{x^a}{x^{a'}}\,,\\
        \x^i&=dx^{a'}T^i_{a'}(x')+\x^{i'}T^i_{i'}(x')\,,
    \end{aligned} \right.
\end{equation}
with some matrices $T^i_{a'}(x')$ and $T^i_{i'}(x')$. The
homological vector field $Q$ on $\Pi E$ defining the structure of a
Lie algebroid over $M$  has the form
\begin{equation}\label{exam.eq.transitiveQ}
    Q=d+\frac{1}{2}\Bigl(\xi^i\xi^jQ_{ji}^k(x)+2\x^idx^aQ_{ai}^k(x)+dx^adx^bQ_{ba}^k(x)\Bigr)\der{}{\x^k}\,
\end{equation}
in local coordinates, with $d=dx^a\partial/\partial {x^a}$. Or:
\begin{equation}\label{exam.eq.transitiveQ2}
    Q=d+\o\,,
\end{equation}
where
\begin{equation}\label{exam.eq.transitiveomega}
    \o=\frac{1}{2}\Bigl(\xi^i\xi^jQ_{ji}^k(x)+2\x^idx^aQ_{ai}^k(x)+dx^adx^bQ_{ba}^k(x)\Bigr)\der{}{\x^k}
\end{equation}
can be regarded as a (local) form on $M$ with values in the Lie
superalgebra $L=\Vect (\Pi V)$. Here $V$ is the standard fiber of
$E\to TM$. Note that the form $\o$ is odd and inhomogeneous w.r.t.
ordinary degree. However, it  is homogeneous of weight $+1$ if
weight is counted as the sum of form degree and grading of vector
fields on $\Pi V$. Claim: the
form~\eqref{exam.eq.transitiveomega} is gauge-equivalent to a
constant. This amounts to saying that by an appropriate affine
transformation of the coordinates $\x$,
\begin{equation*}
    \x=\h\cdot A(x)+\beta
\end{equation*}
where the matrix $A(x)$ depends only on $x$ and $\beta$ is a
$1$-form, one can transform the vector field $Q$ on $\Pi E$ to the
form
\begin{equation}\label{exam.eq.transitiveQ3}
    Q=d+C  \quad \text{where} \quad C=\frac{1}{2}\h^i\h^jC_{ji}^k\der{}{\h^k}\in \Vect(\Pi
V)\,,
\end{equation}
with constant $C_{ij}^k$. %(We have dropped the dashes over $\x$'s.)
This follows by a direct application of
Theorem~\ref{nonabelpoinc_thm.napoinlemma}. That the transformation
is affine, follows from the preservation of weights and can be seen
from an explicit construction of the multiplicative integrals. In
particular, $C^2=0$, therefore the vector space $V$ is endowed with
a structure of a Lie algebra. This  encapsulates a substantial part
of the transitive Lie algebroid theory~\cite[Ch.
8]{mackenzie:book2005}.
\end{ex}

The previous example prompts an immediate analogy. A few words
should be said before we pass to it.

Recall that a \emph{$Q$-manifold} is an arbitrary supermanifold
endowed with a homological vector field. $Q$-manifolds should be
regarded, together with Poisson manifolds and Schouten (odd Poisson)
manifolds, as one of the three equally important non-linear
generalizations of Lie algebras\footnote{A Lie algebra multiplication of the elements of $\mathfrak g$ has
precisely three geometric manifestations: as a linear Poisson
bracket on $\mathfrak{g}^*$; as a linear Schouten bracket on
$\Pi\mathfrak{g}^*$; and as a quadratic homological vector field on
$\Pi\mathfrak{g}$.}. Vaintrob noted~\cite{vaintrob:algebroids} that
a Lie algebroid structure on a vector bundle $E$ is very efficiently
described by a homological vector field  on the parity-reversed
total space $\Pi E$. This approach was further developed in
particular in~\cite{roytenberg:thesis}, \cite{tv:graded},
\cite{roytenberg:graded}, and \cite{tv:mack}. This is what we used
above. (We use the notation of~\cite{tv:graded}.) For a Lie
algebroid the corresponding field $Q$ has weight $+1$ w.r.t. the
linear coordinates in the fibers. Dropping this restriction leads to
objects like strongly-homotopy Lie (or $L_{\infty}$) algebras and
algebroids. On the other hand, constructions such as the `cotangent
construction of the Drinfeld double' for Lie
bialgebroids~\cite{roytenberg:thesis} (compare
also~\cite{mackenzie:drinfeld}) take us out of the world of vector
bundles. Prompted by that, it is proper to replace a lacking linear
structure by weight and consider graded manifolds instead of vector
bundles~\cite{tv:graded}. In particular, a non-negatively graded
$Q$-manifold with $\w(Q)=+1$ is the closest analog of a Lie
algebroid and should be regarded as the `non-linear version' of
such.

\begin{ex}[``Non-linear transitive Lie algebroids''] \label{exam.ex.transnonlin}
Consider a fiber bundle
\begin{equation}\label{exam.eq.nonlineartransitive}
    p\co E\to \Pi TM
\end{equation}
in the category of graded manifolds. Here $E$ is a non-negatively
graded manifold (in particular, a supermanifold)\footnote{Comparing
with the previous example, $E$ now stands for what was $\Pi E$.},
$M$ is an ordinary supermanifold, with trivial grading, and   $\Pi
TM$ is considered with the standard vector bundle grading. (More
pedantically it should be denoted $\Pi T[1]M$.) Suppose there is a
homological vector field $Q\in \Vect(E)$ of weight $+1$ such that
$Q$ on $E$ and $d$ on $\Pi TM$ are $p$-related. Then in suitable
local coordinates $Q$ has the form
\begin{equation}\label{exam.eq.nonlineartransitiveQ}
    Q=dx^a\der{}{x^a}+Q^i(x,dx,y)\der{}{y^i}\,,
\end{equation}
where $x^a$ are local coordinates on $M$, $dx^a$ are their
differentials, and $y^i$ are coordinates in the fiber over $\Pi TM$.
The  changes of coordinates on $E$ have the form
\begin{equation}\label{exam.eq.nonlineartransitivecoorchanges}
    \left\{
    \begin{aligned}
        x^a&=x^a(x')\,,\\
        dx^a&=dx^{a'}\der{x^a}{x^{a'}}\,,\\
        y^i&=y^i(x',dx',y')\,,
    \end{aligned} \right.
\end{equation}
The appearance of $d$ as the first term
in~\eqref{exam.eq.nonlineartransitiveQ} follows from the condition
that $Q$ and $d$ are $p$-related. We have $\w(x^a)=0$, $\w(dx^a)=1$,
and $\w(y^i)=w^i>0$ are some positive integers.  The coordinate
transformations~\eqref{exam.eq.nonlineartransitivecoorchanges} are
homogeneous polynomials in $dx^{a'},y^{i'}$. The coefficients
$Q^i(x,dx,y)$ in~\eqref{exam.eq.nonlineartransitiveQ} have weights
$w^i+1$ respectively. Compare with~\eqref{exam.eq.transitiveQ}. The
whole structure consisting of the fiber bundle $E\to \Pi TM$ and the
field $Q$ with these properties should be regarded as a
\textbf{transitive non-linear Lie algebroid}. The bundle projection
$p\co E\to \Pi TM$ plays the role of the anchor. A construction of
this type with an extra restriction\,---\,see below\,---\,was put
forward in~\cite{strobl:talks, kotov:strobl2007}. They introduced
$Q$-bundles $p\co E\to B$ where both $E$ and $B$ are $Q$-manifolds
and the projection is a $Q$-morphism (the respective vector fields
are $p$-related) with a condition of `local triviality' in the
following strong sense. The standard fiber $F$ is a $Q$-manifold and
there is a bundle atlas where in local trivializations the
homological vector field $Q_E$ on $E$ takes the form of the sum
\begin{equation}\label{exam.eq.stroblgauge}
    Q_E=Q_B+Q_F
\end{equation}
where $Q_B$ and $Q_F$ are the homological vector fields on $B$ and
$F$; the transition functions are such that they respect the
decomposition~\eqref{exam.eq.stroblgauge}\footnote{There are more
conditions to be mentioned: the total space and the base are graded
manifolds and the homological fields have weights $+1$\,---\,in
fact, in~\cite{strobl:talks, kotov:strobl2007} it is assumed that
grading induces parity, which is absolutely unnecessary;  and some
Lie group of transformations preserving~\eqref{exam.eq.stroblgauge}
is   fixed  as a part of  data.}. The main case is when the
$Q$-manifold $B$ equals $\Pi TM$ for some $M$. So it is a transitive
non-linear Lie algebroid in our sense subject to the condition of
the existence of a bundle atlas for $E\to \Pi TM$ with the described
properties. We shall refer to that, in particular to
equation~\eqref{exam.eq.stroblgauge}, as to a \textbf{``Kotov--Strobl gauge''}   . Our
Theorem~\ref{nonabelpoinc_thm.napoinlemma} therefore implies that
\emph{every transitive non-linear Lie algebroid admits a Kotov--Strobl
gauge}. That means that it is possible to transform coordinates
$y^i$ to $z^j$ by a homogeneous transformation depending on $x^a,
dx^a$ as parameters in each bundle chart so that the vector
field~\eqref{exam.eq.nonlineartransitiveQ} becomes
\begin{equation}\label{exam.eq.nonlineartransitiveQguaged}
    Q=dx^a\der{}{x^a}+Q^i(z)\der{}{z^i}\,
\end{equation}
(no dependence on $x^a,dx^a$ in the second term). The graded Lie
superalgebra $L$ here is the algebra of all graded vector fields
$\Vect(F)$ on the standard fiber   $F$. In particular, we conclude
that constructions of~\cite{kotov:strobl2007} are valid for
arbitrary transitive non-linear Lie algebroids, without extra
restrictions.
\end{ex}

An appearance of  fiber bundles of the form  $E\to \Pi TM$ may look
artificial. It  helps to put it into a broader perspective as
follows. Consider an arbitrary non-negatively graded $Q$-manifold
$E$ where $\w(Q)=+1$. As noted before, it should be regarded as a
general \textbf{non-linear Lie algebroid}. At the first glance,
there is no fiber bundle there. Recall however that any
non-negatively graded manifold $E$ gives rise to a finite tower of
fibrations~\cite{tv:graded}
\begin{equation}\label{exam.eq.tower}
   E= E_N\to E_{N-1}\to \ldots \to E_2\to E_1\to E_0\,,
\end{equation}
where functions on $E_0$ have weight $0$, $E_1\to E_0$ is a vector
bundle, and $E_{k+1}\to E_k$ for higher $k$ are affine bundles. This
can be assembled into a single fiber bundle
\begin{equation}\label{exam.eq.bundle}
    E\to M\,,
\end{equation}
where $M=E_0$, the coordinates on the standard fiber have positive
weights and the transition functions are homogeneous polynomials.
The supermanifold $M$ is embedded into $E$ as a `zero section'. The
homological vector field $Q\in \Vect(E)$ in bundle coordinates has
the form
\begin{equation}\label{exam.eq.homfieldgeneral}
    Q=Q^a(x,y)\der{}{x^a}+Q^i(x,y)\der{}{y^i}\,,
\end{equation}
where $x^a$ are local coordinates on the base $M$, and $y^i$ are
coordinates on the fiber. We define a map
\begin{equation}\label{exam.eq.nonlinearanchor}
    a\co E\to \Pi TM
\end{equation}
by the formula $a^*(x^a)=x^a, a^*(dx^a)=Q^a(x,y)$ and call it the
\emph{anchor} for $E$. (Well-defined because we have a bundle, so
the transformation of the coordinates is of the form $x^a=x^a(x'),
y^i=y^i(x',y')$.) It is a bundle map over $M$, and a $Q$-morphism
(by a direct check using $Q^2=0$). In the  case of ordinary Lie
algebroids it coincides with the usual anchor after the parity
reversion. If the anchor $E\to \Pi TM$ is a surjective submersion,
we call the non-linear Lie algebroid $E$ \emph{transitive}, as in
the ordinary case~\cite{mackenzie:book2005}, and so we arrive at the
setup of Example~\ref{exam.ex.transnonlin}.

\begin{rem}
A tautological `anchor'  makes sense for any $Q$-manifold $M$
without assumption of grading: it is the vector field $Q$ itself
considered as a map $M\to \Pi TM$, which is a $Q$-morphism. Of
course  such an  `anchor' carries information about all the
algebraic structures contained in the vector field $Q$, so the name
becomes a bit arbitrary.
\end{rem}

More about the `non-linear Lie algebroids' and algebraic structure arising from them can be found in our new paper~\cite{tv:qman}.

\section{Discussion}\label{sec.discussion}

We have studied inhomogeneous or pseudodifferential odd forms on a supermanifold $M$
taking values in a Lie superalgebra $\mathfrak{g}$ and showed that
if they satisfy the Maurer--Cartan (zero curvature) equation $d\o+\o^2=0$, then, on a
contractible supermanifold, they are gauge-equivalent to constants:
\begin{equation}\label{dis.eq.gaugetoconst}
    \o=gCg^{-1}-dg\,g^{-1}\,,
\end{equation}
where $g$ is an even $G$-valued form and $C\in \mathfrak{g}_{\bar
1}$ such that $C^2=0$ (Theorem~\ref{nonabelpoinc_thm.napoinlemma}). The statement
follows from   more general claims (Theorem~\ref{multi_thm.nonabhomotopy} and
Corollary~\ref{multi_cor.nonabhomotopy}), which are a non-Abelian analog for
$\mathfrak{g}$-valued pseudodifferential forms of   the standard chain  homotopy construction. For applications, a presence of an extra  $\ZZ$-grading different from the naive form degree may be important; all statements hold true with such an extra grading taken into account. Our statements are
particularly useful in examples related with Lie algebroids and
their generalizations, but there is little doubt about their broader
significance.

Let us emphasize that in the existing literature, especially, in physics (gauge theory), ``non-Abelian Poincar\'{e} lemma'' has been almost exclusively understood as the familiar statement about Lie algebra valued $1$-forms.
%, that locally the zero curvature condition is necessary and sufficient for such a form to be the Darboux differential of a function with values in a corresponding Lie group.
Our usage is different, and this should not lead to confusion. The main novelty  of the present work is that  we study  inhomogeneous or  pseudodifferential odd forms instead of $1$-forms. When the coefficients belong to a Lie superalgebra, this  results in mixing terms of different form degrees. That  has not been done previously and we are led naturally to that by applications such as to Lie algebroids.  The  appearance of an odd constant $C$ and of  gauge transformation generated by a $G$-valued form (instead of a function) are two distinctive features of our formula~\eqref{dis.eq.gaugetoconst}.

\begin{rem}
An exception to the typical usage of ``non-Abelian Poin\-car\'{e} lemma'' as the statement about $1$-forms is paper~\cite{asada:1986} concerned with  a  `non-Abelian de Rham theory' regarded as the theory of certain cohomological sets constructed from matrix-valued $p$-forms. There is no relation
between that and our work. Shortly,    in~\cite{asada:1986},  there are a `$1$-dimensional non-Abelian Poincar\'{e} lemma' meaning the familiar statement for $1$-forms; a `$2$-dimensional non-Abelian Poincar\'{e} lemma' meaning a condition of local solubility of $d\theta +\theta^2=\Theta$, $\theta$ and $\Theta$ being matrix-valued $1$- and $2$-forms respectively; and a `$3$-dimensional non-Abelian Poincar\'{e} lemma' meaning a condition of local solubility of $d\Theta +[\theta,\Theta]=\Psi$ for a $3$-form $\Psi$.
\end{rem}

On the other hand, the results of this paper are related with homotopy theory considered from the viewpoint of algebraic deformation theory.

After the first version of this paper was finished and circulated, we learned about a ``secret'' work
by {Schles\-singer} and Stasheff~\cite{schless:stasheff-secret} (still an unpublished manuscript)  and a   preprint of Chuang and Lazarev (now published~\cite{chuang:lazarev2008}). Stasheff pointed out to us that certain formulas  such as our equation~\eqref{multi_eq.nonabelhomotopyflat} in
Corollary~\ref{multi_cor.nonabhomotopy} reminded him of some statements in~\cite{schless:stasheff-secret}  and kindly shared his text with us. To explain the relation, it is convenient to use a reformulation of
the  Main Homotopy Theorem of {Schles\-singer}--Stasheff~\cite{schless:stasheff-secret} given by Chuang and Lazarev~\cite{chuang:lazarev2008}.  The theorem concerns Maurer--Cartan elements in a nilpotent (or pro-nilpotent) dg Lie algebra. The statement is that \emph{any two such elements are gauge-equivalent if and only if they are homotopic} (the exact notion of homotopy is explained below). Apart from  considering dg algebras, not superalgebras, and imposing the (pro-)nilpotence condition, the statement is   close to an abstract algebraic version of the zero curvature case of our Corollary~\ref{multi_cor.nonabhomotopy}. The method of the proof used in~\cite{chuang:lazarev2008}\,---\,formal path-ordered integrals with values in a completion of the universal enveloping
algebra (here the pro-nilpotence is used)\,---\,is also close to ours, but is more algebraic in spirit. So in spite of a really different language, there is an overlap between some of our results and those of~\cite{schless:stasheff-secret}, \cite{chuang:lazarev2008}; a comparison can be presented as follows.

Because we are motivated by  geometric applications, we deal mainly with differential forms, while {Schles\-singer}--Stasheff and Chuang--Lazarev work in the abstract setting of   dg Lie algebras.  They consider only Maurer--Cartan elements. For us, forms satisfying the Maurer--Cartan equation  are, surely,  of great importance, but we start from establishing general formulas such as `non-Abelian chain homotopy' for the arbitrary case.

We have already mentioned at the end of section 2 (see Remark~\ref{multi_rem.abstract}) that  results of that section extend to  a more abstract framework. This was not our original goal, but may be of independent interest and allows, in particular, further comparison with  ~\cite{schless:stasheff-secret} and \cite{chuang:lazarev2008}.
We shall give more details now. We shall  obtain  a generalization of both our Theorem~\ref{multi_thm.nonabhomotopy} and the `Main Homotopy Theorem'  quoted above.

Consider a differential Lie superalgebra, i.e., a Lie superalgebra $\boldsymbol{\mathfrak{g}}$ endowed with an odd derivation $d$ such that $d^2=0$. A corresponding Lie supergroup $\boldsymbol{G}$ has a $Q$-group  structure, i.e., it is endowed with a homological vector field $Q$ such that
\begin{equation*}
    Q(g_1g_2)=Q(g_1)g_2+g_1Q(g_2)\,.
\end{equation*}
(In particular, $Q(1)=0$ and $Q(g^{-1})=-g^{-1}Q(g)g^{-1}$.) In the main text, both $d$ and $Q$ are given by the de Rham differential. It is convenient, if no confusion is possible, to continue to use the notation $d$ for both operations in the abstract setup as well. So from now on we replace $Q$ by $d$ and write $dg$ for $d(g)$.  Consider odd elements of $\boldsymbol{\mathfrak{g}}$. They should be properly regarded as points of the supermanifold $\Pi \boldsymbol{\mathfrak{g}}$. The notion of gauge-equivalence carries over verbatim to them:
\begin{equation*}
    \o'=g\o g^{-1}-dg\,g^{-1}\,.
\end{equation*}
Here $g$ should be regarded as a point of the supergroup $\boldsymbol{G}$. The Darboux differential on $\boldsymbol{G}$ is defined as $\Delta(g)=-dg \,g^{-1}$. It is a supermanifold map $\boldsymbol{G}\to \Pi \boldsymbol{\mathfrak{g}}$. We can speak about the curvature $\curv \o=d\o+\o^2$ (where $\o^2=[\o,\o]/2$, as usual)  of an element $\o\in \Pi \boldsymbol{\mathfrak{g}}$. (Upon natural identifications, $\curv \o$ is exactly the value of the homological field defining the dLie algebra structure of $\boldsymbol{\mathfrak{g}}$ at the argument $\o$.) Consider forms on the unit segment $I$ with values in $\boldsymbol{\mathfrak{g}}$. They make a dLie algebra w.r.t. the bracket extended  from $\boldsymbol{\mathfrak{g}}$ by $\O(I)$-linearity and the operator $D$,
\begin{equation*}
    D\o=d\o+dt\der{\o}{t}\,
\end{equation*}
as the differential, where the first term  is the differential on $\boldsymbol{\mathfrak{g}}$ applied `pointwise'. (This models $\mathfrak{g}$-valued forms on the cylinder $M\times I$.) We can define now the multiplicative integral
\begin{equation*}
    g(t_1,t_0)=\texpinta{t_0}{t_1}(-\o)\,,
\end{equation*}
of an odd element of $\O(I, \boldsymbol{\mathfrak{g}})$. It takes values in the supergroup $\boldsymbol{G}$. We denote by $g$ without arguments the element $g(1,0)$. %There is a fundamental statement: %The following  statement is an abstract analog of Theorem~\ref{multi_thm.nonabhomotopy}.
\begin{lm} \label{dis.lemma}The following identity holds:
\begin{equation*}
    \D \Bigl(\texpint(-\o)\Bigr) + \int_0^1\!\!\! \Ad g(1,t) \,\bigl(\curv \o\bigr) = \o_0(1)-\Ad g \,\o_0(0)
\end{equation*}
or
\begin{equation*}
    -dg\,g^{-1} + \int_0^1\!\!\!  g(1,t) \,\bigl(\curv \o\bigr)g(1,t)^{-1} = \o_0(1) - g \,\o_0(0)\,g^{-1}\,.
\end{equation*}
\end{lm}
\begin{proof} This is  an  analog of Theorem~\ref{multi_thm.nonabhomotopy} and its  proof can be repeated verbatim.
\end{proof}
If we elaborate the curvature appearing under the integral above, we obtain the following. For an odd  $\o=\o_0+dt\o_1$,
\begin{align*}
    \curv \o &= d\o_0+\o_0^2 + dt \left(-d\o_1+\der{\o_0}{t}-[\o_0,\o_1]\right) \\
             & = \curv\o_0+ dt \left(-d\o_1+\der{\o_0}{t}-[\o_0,\o_1]\right)\,
\end{align*}
(compare equation~\eqref{multi_eq.curv}). Clearly, the input to the integral is given only by the second term (that contains $dt$). If the coefficient vanishes, the integral vanishes from the formula.

Now we can define a notion of `homotopy' for arbitrary odd elements of a dLie algebra $\boldsymbol{\mathfrak g}$, which extends the notion   used in~\cite{chuang:lazarev2008}  for the Maurer--Cartan elements.

We say that two odd elements, $\o^0$ and $\o^1$, in $\boldsymbol{\mathfrak g}$ are \emph{homotopic} if there is an odd element $\o\in \O(I,\boldsymbol{\mathfrak g})$ such that:

1) $\o$ specializes to $\o^0$ and $\o^1$ for $t=0$ and $t=1$ (and $dt=0$), and

2) the curvature of $\o$ does not contain $dt$.

\noindent
The element $\o$ is called a \emph{homotopy} between $\o^0$ and $\o^1$.

(It will be shown  later that for the case of zero curvature this reproduces the notion of homotopy for the Maurer--Cartan elements.)

\begin{ex} Let $\boldsymbol{\mathfrak g}=\O(M)$ for some manifold $M$, with the zero bracket and the de Rham $d$. By an easy exercise one can  see  that   two forms $\o^0, \o^1\in \O(M)$ (parity is not essential here) are homotopic in the above sense if and only if $\o^1-\o^0=d\sigma$ for some $\sigma\in \O(M)$. So in this toy case, `homotopy' it is just cohomology relation for  not necessarily closed forms.
\end{ex}

Now the main statement.

\begin{thm} \label{theorem}
Let $\boldsymbol{\mathfrak g}$ be a dLie algebra and $\boldsymbol{G}$ be a corresponding dLie group. Let $\boldsymbol{G}$ be connected. Then two odd elements $\o^0$ and $\o^1$ in $\boldsymbol{\mathfrak g}$ are homotopic if and only if they are gauge-equivalent.
\end{thm}
\begin{proof} Suppose $\o^0$ and $\o^1$  are homotopic. Then there is an odd element $\o=\o_0+dt\, \o_1\in \O(I,\boldsymbol{\mathfrak g})$ such that $\o_0(0)=\o^0$, $\o_0(1)=\o^1$  and $\curv \o$ does not contain $dt$. Then, by Lemma~\ref{dis.lemma},
\begin{equation*}
    -dg\,g^{-1}    = \o_0(1) - g \,\o_0(0)\,g^{-1}
\end{equation*}
(there is no second term at the LHS because  the integral  vanishes),
or
\begin{equation*}
   \o^1= g \,\o^0\,g^{-1} -dg\,g^{-1} \,,
\end{equation*}
i.e., $\o^0$ and $\o^1$  are gauge-equivalent.

Conversely, suppose $\o^0$ and $\o^1$  are gauge-equivalent, i.e., $\o^1= g \,\o^0\,g^{-1} -dg\,g^{-1}$,
%\begin{equation*}
  % \o^1= g \,\o^0\,g^{-1} -dg\,g^{-1} \,,
%\end{equation*}
for some $g\in \boldsymbol{G}$. Because we have assumed that $\boldsymbol{G}$ is connected, we can consider a path $h=h(t)$ such that $h(0)=1$ and $h(1)=g$. Define $\o=\o_0+dt\, \o_1\in \O(I,\boldsymbol{\mathfrak g})$ by setting
\begin{equation*}
    \o_0:=h \,\o^0\,h^{-1} -dh\,h^{-1}
\end{equation*}
(so that we indeed have $\o_0(0)=\o^0$ and $\o_0(1)=\o^1$) and
\begin{equation*}
    \o_1:=- \dot h \,h^{-1}\,.
\end{equation*}
By a direct calculation, we can see that the equation
\begin{equation*}
    \der{\o_0}{t}=-d\o_1+[\o_0,\o_1]\,,
\end{equation*}
is satisfied. Therefore $\o$ is a desired homotopy between $\o^0$ and $\o^1$.
\end{proof}

Theorem~\ref{theorem} is a generalization of the Main Homotopy Theorem  quoted above,  which is Theorem~4.4 in~\cite{chuang:lazarev2008}.
\begin{rem} \label{dis.rem.sulhom}
The notion of homotopy  for the Maurer--Cartan elements used in\cite{chuang:lazarev2008} (`Sullivan homotopy') is as follows: two Maurer--Cartan elements $\o^0$ and $\o^1$ in $\boldsymbol{\mathfrak g}$ are \emph{homotopic} if there is a Maurer--Cartan element $\o\in \O(I,\boldsymbol{\mathfrak g})$   specializing to $\o^0$ and $\o^1$ resp. for $t=0$ and $t=1$. If we apply our definition instead, it is not known a priori that a homotopy $\o$ is Maurer--Cartan. However, it automatically is, if only $\o^0$ and $\o^1$ are Maurer--Cartan, due to the following observation. We know that $\curv \o$ not containing $dt$  translates into the equation
\begin{equation*}
    \der{\o_0}{t}=-d\o_1+[\o_0,\o_1]\,.
\end{equation*}
Consider the time derivative of $\curv \o_0$. The above equation implies
\begin{equation*}
    \der{\,\curv\o_0}{t}=-[\o_1,\curv\o_0]\,.
\end{equation*}
This is a linear differential equation for $\curv \o_0$. So if $\curv\o_0(0)=0$, then $\curv\o_0=0$ for all $t$. Since, for a homotopy $\o$, $\curv\o=\curv\o_0$, we conclude that a homotopy between Maurer--Cartan elements of $\boldsymbol{\mathfrak g}$ is necessarily given by a Maurer--Cartan element of $\O(I,\boldsymbol{\mathfrak g})$, and therefore our notion of homotopy for arbitrary odd elements of a dLie algebra agrees with the notion  used before for the Maurer--Cartan elements.
\end{rem}

Possibly one can obtain further generalizations to $L_{\infty}$-algebras and arbitrary $Q$-manifolds.

In conclusion, we wish to comment on the geometric meaning of the multiplicative integrals in this paper. (This requires a discussion because they are not integrals of $1$-forms, in general.) %non-Abelian

For an ordinary $1$-form, one can integrate it over paths, for a
closed form obtaining a function of the upper limit, which is a
primitive of the form. Similarly, for a  $1$-form $\o\in
\O^1(M,\mathfrak{g})$ with values in a Lie (super)algebra, one can
take multiplicative integrals over paths, which represent the
parallel transport w.r.t. the corresponding connection. In the flat
case this gives a $G$-valued function $g$ on $M$ such that
$\o=\Delta(g)$. The   picture  seems to break down when a $1$-form
is replaced by a form of higher degree, or even more, by an
inhomogeneous or pseudodifferential form\,---\,how  can one integrate something
that is not a $1$-form over a path?   The key of course is that  we
deal with  families of paths, not
individual paths. Even if what we integrate is not a $1$-form, the
integral over paths in a family does make sense, giving a form,
rather than a function, on the space of parameters.  In the flat case, it is just the
supermanifold $M$ parameterizing the upper limit or $M\times M$ for the two limits.
Therefore~\eqref{nonabelpoinc_eq.nabpoinc-primitstar}  and similar
formulas have a geometric interpretation as the parallel transports
over paths in a family  with base $M$ with respect to a flat Quillen
superconnection $d+\o$, where such a parallel transport is described by a
$G$-valued form on $M$.

It comes about naturally to drop the condition of flatness and
consider the  space of all paths on $M$ (of some appropriate class, such as, e.g., piecewise smooth). It is  a ``path groupoid'' $\Paths\, (M)$ (objects are points of $M$, arrows are paths). More precisely, consider
a principal $G$-bundle $P\to M$ over $M$, where $G$ is a
supergroup\,\footnote{\,It may be natural to consider  from the start bundles over
$\Pi TM$.}, endowed with Quillen's superconnection
$\nabla$. Then the  \emph{parallel transport} corresponding to
$\nabla$ is an even {form} on the path groupoid $\Paths\,
(M)$ on $M$  with values
in the transitive (super) Lie groupoid $\Frame(P)$ associated with the principal
bundle $P\to M$, i.e., it is a supermanifold map
\begin{equation}\label{dis.eq.parallel}
   \tau\co \Pi T\Paths\, (M) \to \Frame(P)\,,
\end{equation}
which is a morphism of Lie groupoids \footnote{\,$\Paths\, (M)$ is not exactly a groupoid, because even if we consider paths up to reparametrization and thus achieve the associativity,   there  remain problems with  units and inverses. Still, \eqref{dis.eq.parallel} is a morphism to a genuine Lie groupoid; this leads to the temptation to introduce formally a universal groupoid with a morphism  from $\Paths\, (M)$ to it, as some sort of `non-Abelian singular $1$-chains'.}.

\smallskip
I thank   H.~M.~Khudaverdian and K.~C.~H.~Mackenzie for many inspiring discussions. I am much grateful to J.~D.~Stasheff
for reading  through the first version of the text and providing numerous valuable
comments on style as well as mathematics and for sharing the unpublished manuscript~\cite{schless:stasheff-secret}, and to A.~Lazarev for attracting my attention to  paper~\cite{chuang:lazarev2008}. It is a pleasure to thank them all.

%\bibliographystyle{hplain}
%\bibliography{geometry}

\begin{thebibliography}{10}

\bibitem{asada:1986}
A.~Asada.
\newblock Nonabelian {P}oincar\'e lemma.
\newblock In {\em Differential geometry, {P}e\~n\'\i scola 1985}, volume 1209
  of {\em Lecture Notes in Math.}, pages 37--65. Springer, Berlin, 1986.

\bibitem{chuang:lazarev2008}
J.~Chuang and A.~Lazarev.
\newblock Feynman diagrams and minimal models for operadic algebras.
\newblock {\em J. London Math. Soc.}, 81 (2):317--–337, 2010,
  \texttt{arXiv:0802.3507v1 [math.AT]}.

\bibitem{gantmacher:appl}
F.~R. Gantmacher.
\newblock {\em Applications of the theory of matrices}.
\newblock Translated by J. L. Brenner, with the assistance of D. W. Bushaw and
  S. Evanusa. Interscience Publishers, Inc., New York, 1959.

\bibitem{kotov:strobl2007}
A.~Kotov and Th.~Strobl.
\newblock Characteristic classes associated to {$Q$-bundles}.
\newblock 2007, \texttt{arXiv:0711.4106v1 [math.DG]}.
\newblock 23 pages.

\bibitem{mackenzie:drinfeld}
K.~C.~H. Mackenzie.
\newblock Drinfel\cprime d doubles and {E}hresmann doubles for {L}ie algebroids
  and {L}ie bialgebroids.
\newblock {\em Electron. Res. Announc. Amer. Math. Soc.}, 4:74--87
  (electronic), 1998.

\bibitem{mackenzie:book2005}
K.~C.~H. Mackenzie.
\newblock {\em General theory of {L}ie groupoids and {L}ie algebroids}, volume
  213 of {\em London Mathematical Society Lecture Note Series}.
\newblock Cambridge University Press, Cambridge, 2005.

\bibitem{manturov:multint90}
O.~V. Manturov.
\newblock A multiplicative integral.
\newblock In {\em Problems in geometry, {V}ol.\ 22 ({R}ussian)}, Itogi Nauki i
  Tekhniki, pages 167--215, 219. Akad. Nauk SSSR Vsesoyuz. Inst. Nauchn. i
  Tekhn. Inform., Moscow, 1990.
\newblock Translated in J. Soviet Math. {{\bf{5}}5} (1991), no. 5, 2042--2076.

\bibitem{moerdijk:mrcun-book}
I.~Moerdijk and J.~Mr{\v{c}}un.
\newblock {\em Introduction to foliations and {L}ie groupoids}, volume~91 of
  {\em Cambridge Studies in Advanced Mathematics}.
\newblock Cambridge University Press, Cambridge, 2003.


\bibitem{quillen:superconnection85}
D.~Quillen.
\newblock Superconnections and the {C}hern character.
\newblock {\em Topology}, 24(1):89--95, 1985.

\bibitem{roytenberg:thesis}
D.~Roytenberg.
\newblock {\em {Courant} algebroids, derived brackets and even symplectic
  supermanifolds}.
\newblock PhD thesis, {UC Berkeley}, 1999, \texttt{arXiv:math.DG/9910078}.

\bibitem{roytenberg:graded}
D.~Roytenberg.
\newblock On the structure of graded symplectic supermanifolds and {Courant}
  algebroids.
\newblock In {\em Quantization, Poisson Brackets and
  Beyond}, volume 315 of {\em Contemp. Math.}, pages 169--186. Amer. Math.
  Soc., Providence, RI, 2002, {\tt arXiv:math.SG/0203110}.

\bibitem{schless:stasheff-secret}
M.~ Schlessinger and J.~ Stasheff.
\newblock Deformation theory and rational homotopy type.
\newblock 1979\,--\,.
\newblock (Manuscript in preparation).

\bibitem{strobl:talks}
Th.~ Strobl.
\newblock Talks at the conference ``Algebraic Aspects in Geometry'',
  B\c{e}dlewo, Poland, 17--23 October 2007, and the {XXVII Workshop on
  Geometric Methods in Physics}, Bia{\l}owie\.{z}a, Poland, 29 June to 5 July
  2008.

\bibitem{vaintrob:algebroids}
A.~Yu. Va{\u\i}ntrob.
\newblock Lie algebroids and homological vector fields.
\newblock {\em Uspekhi Matem. Nauk}, 52(2):428--429, 1997.


\bibitem{tv:graded}
Th. Voronov.
\newblock Graded manifolds and {Drinfeld} doubles for {Lie} bialgebroids.
\newblock In {\em Quantization, Poisson Brackets and
  Beyond}, volume 315 of {\em Contemp. Math.}, pages 131--168. Amer. Math.
  Soc., Providence, RI, 2002, {\tt arXiv:math.DG/0105237}.

\bibitem{tv:mack}
Th. Voronov.
\newblock {Mackenzie} theory and {$Q$-manifolds}.
\newblock \texttt{arXiv:math/0608111v2 [math.DG]}.

\bibitem{tv:qman}
Th. Voronov.
\newblock {$Q$-manifolds} and higher analogs of {Lie} algebroids.
\newblock In \emph{XXIX Workshop on Geometric Methods in Physics.} AIP CP \textbf{1307}, pages  191--202. Amer. Inst. Phys., Melville, NY, 2010,
\newblock \texttt{arXiv:1010.2503v1 [math.DG]}.
\end{thebibliography}

%\end{document}
\def\cprime{$'$}

\end{document}